\renewcommand{\small}{\fontsize{10}{10}\selectfont}
\newtheorem{theorem}{Theorem}[section]
\DeclareMathOperator{\Grass}{Grass}
\DeclareMathOperator{\Flag}{Flag}
\DeclareMathOperator{\rank}{rank}
\DeclareMathOperator{\Tor}{Tor}
\DeclareMathOperator{\im}{im}
\DeclareMathOperator{\depth}{depth}
\theoremstyle{plain}
\date{\today}
\subjclass[2010]{13C40, 13D02, 13H10} 
\keywords{perfect ideal, codimension three, linkage class, generic ring, Cohen-Macaulay, complete intersection}
\begin{document}

\title{\textbf{The family of perfect ideals of codimension 3, of type 2 with 5 generators}}

\author[Ela Celikbas]{Ela Celikbas}
\address{Department of Mathematics, West Virginia University, Morgantown, WV 26506.}
\email{ela.celikbas@math.wvu.edu}

\author[Jai Laxmi]{Jai Laxmi}
\address{Department of Mathematics, I.I.T. Bombay, Powai, Mumbai 400076.}
\email{jailaxmi@math.iitb.ac.in}

\author[Witold Kra\'skiewicz]{Witold Kra\'skiewicz}
\address{Department of Mathematics, Nicholas Copernicus University, Toru\'n, Poland.}
\email{wkras@mat.umk.pl}

\author{Jerzy Weyman}
\address{Department of Mathematics, University of Connecticut, Storrs, CT 06269.}
\email{jerzy.weyman@uconn.edu}

%
%
%


\maketitle
\begin{abstract}
In this paper we define an interesting family of  perfect ideals  of codimension three, with five generators, of Cohen-Macaulay type two with trivial multiplication on the $\Tor$ algebra.
This family is likely to play a key role in classifying  perfect ideals  with five generators of type two.

\end{abstract}
\section{Introduction}

Perfect ideals of codimension three have been investigated for a long time. Linkage theory (\cite{PS}) suggests such ideals might be possible to classify. Indeed, if one applies minimal linkage to a perfect ideal of codimension three, one gets an ideal with a minimal free resolution with the same sum of Betti numbers as the original one, and after double link one obtains the ideal with the free resolution with modules of the same ranks as for the original ideal.

Since the Buchsbaum-Eisenbud discovery of the structure of Gorenstein ideals of codimension three (\cite{DABDEs77}), one can obtain via linkage also the structure of perfect ideals of codimension three with four generators (almost complete intersections) (see \cite{AEB87}, \cite{CVW18}). Hence the smallest class of perfect ideals of codimension three, whose structure is not known are those with five generators, of Cohen-Macaulay type two. Such ideals were classified via linkage with additional assumption that  one of the Koszul relations on the generators is also one of minimal generators of the first syzygy module of the ideal (\cite{AEB87}). Thus the next step is to deal with the ideals for which it does not happen.

It is conceivable that such ideals are in the linkage class of complete intersections, as one can prove (see \cite{CVW-2}) that the grading obstruction of Huneke-Ulrich (\cite{HU}) does not occur for resolutions of this type (at least for homogeneous ideals in a polynomial ring of three variables.

One of the difficulties in classifying such ideals is that most of the known examples are constructed via Macaulay inverse systems so there are many of them corresponding to different Hilbert functions and it is hard to see whether they come from few ``generic" families. The smallest class of examples known to authors was the class of ideals in the polynomial ring with three variables, obtained by Macaulay inverse system from two forms of degree four.

In this paper we define an interesting family of the ideals in question, over polynomial ring in 17 variables. We prove its basic properties: the multiplicative structure on their resolutions has constants of positive degree, but they are in the linkage class of complete intersections. 

The family of ideals we describe was discovered using the results of \cite{JWm16}. In that paper the author constructed the so-called generic rings ${\hat R}_{gen}$ for resolutions of length three for all formats. However, the ring ${\hat R}_{gen}$ is Noetherian only in very few cases. Our format of five generators and type two is one of those cases. The family we got comes from picking certain slice in the spectrum of the ring ${\hat R}_{gen}$. Analogous construction for the formats $(1,n,n,1)$ with $n$ even and for the formats $(1,4,n+3,n)$ gives the generic perfect ideals for these formats. Based on this, we expect our family to play a key role in classifying the perfect ideals with five generators, of Cohen-Macaulay type two.

\section{A family of perfect Ideals of codimension three, of type two, with five generators. }
Let $K$ be a field of characteristics different from two. Let $F$ and $G$ be two vector spaces over $K$ of dimensions four and two, respectively. Consider the affine space
$$X=\bigwedge\limits^{2} F^*\otimes G^*\oplus \bigwedge\limits^3 F^*$$
whose elements are pairs consisting of a pencil of $4\times 4$ skew-symmetric matrices
and a $4$-vector. The coordinate ring $A$ of $X$ can be identified with the symmetric algebra 
$$A=\text{Sym}(\bigwedge\limits^{2} F\otimes G\oplus \bigwedge\limits^3 F).$$

We denote the coordinate functions in the coordinate ring $A$ by $x_{i,j}$, $y_{i,j}$ $(1\leq i<j\leq 4)$, and $z_{i,j,k}$ $(1\leq i<j<k\leq 4)$. $A$ is a bigraded ring with $\deg(x_{i,j})=\deg(y_{i,j})=(1,0)$, and $\deg(z_{i,j,k})=(0,1)$.

Consider the equivariant ideal $J$ in $A$ generated by the representation $S_{2,2,2,1}F\otimes \bigwedge\limits^{2} G$ in bidegree $(2,1)$ and the representation $S_{2,2,2,2}F\otimes S_{2,2}G$ in bidegree $(4,0)$. We describe generators of $J$ explicitly. We use $\Delta(ij,kl)$ to denote the $2\times 2$ minor of the matrix 
$$\begin{bmatrix*}
x_{1,2} & x_{1,3} & x_{1,4} & x_{2,3} & x_{2,4} & x_{3,4}\\
y_{1,2} & y_{1,3} & y_{1,4} & y_{2,3} & y_{2,4} & y_{3,4}
\end{bmatrix*} $$
corresponding to the columns labeled by $(i,j)$ and $(k,l)$. \\

The cubic generators of bidegree $(2,1)$ are $u_{1,2,3}$, $u_{1,2,4}$, $u_{1,3,4}$ and $u_{2,3,4}$ where
{\small
\begin{align*}
u_{1,2,3}&=-2z_{2,3,4}\Delta(12,13)+2z_{1,3,4}\Delta(12,23)-2z_{1,2,4} \Delta(13,23)+z_{1,2,3}(\Delta(13,24)-\Delta(12,34)+\Delta(14,23))\\
u_{1,2,4}&=2z_{2,3,4}\Delta(12,14)-2z_{1,3,4}\Delta(12,24)+z_{1,2,4}(\Delta(12,34)+\Delta(13,24)+\Delta(14,23))- 2z_{1,2,3}\Delta(14,24)\\
u_{1,3,4}&=2z_{2,3,4}\Delta(13,14)+z_{1,3,4}(-\Delta(12,34)-\Delta(13,24)+\Delta(14,23))+2z_{1,2,4}\Delta(13,34)- 2z_{1,2,3}\Delta(14,34)\\
u_{2,3,4}&=z_{2,3,4}(-\Delta(12,34)-\Delta(13,24)-\Delta(14,23))-2z_{1,3,4}\Delta(23,24)+2z_{1,2,4}\Delta(23,34)- 2z_{1,2,3}\Delta(24,34).
\end{align*}}

The generator of degree $(4,0)$  is a discriminant of the Pfaffian of the skew-symmetric
$4\times4$ matrix of linear forms treated as a binary form, so it equals $u=b^2-4ac$ where 
\begin{align*}
a &=x_{1,2}x_{3,4}-x_{1,3}x_{2,4}+x_{1,4}x_{2,3},\\
b &=x_{1,2}y_{3,4}-x_{1,3}y_{2,4}+x_{1,4}y_{2,3}+x_{3,4}y_{1,2}-x_{2,4}y_{1,3}+x_{2,3}y_{1,4},\\
c &=y_{1,2}y_{3,4}-y_{1,3}y_{2,4}+y_{1,4}y_{2,3}.
\end{align*}

Let $u_j=\sum_{i\neq j}(-1)^i x_{i,j} z_{\hat{i}}$, $v_j=\sum_{i\neq j}(-1)^iy_{i,j}z_{\hat{i}}$, $\delta_1=\Delta(12,34)$, $\delta_2=\Delta(13,24)$, and $\delta_3=\Delta(14,23)$.
Then the minimal resolution of $A/J$ over $A$ is 
\[{\Bbb F}_\bullet:0\xrightarrow{}A^2\xrightarrow{d_3} A^6\xrightarrow{d_2}A^5\xrightarrow{d_1}A \;\;\text{where}\]
\begin{align*}d_1&=\begin{bmatrix*}
u_{2,3,4} & u_{1,3,4} & u_{1,2,4} & u_{1,2,3} & u
\end{bmatrix*}, \\
d_2&=\begin{bmatrix*}
v_1 & u_1 & -\delta_1+\delta_2-\delta_3 & 2\Delta(13,14) & -2\Delta(12,14) & -2\Delta(12,13) \\
-v_2 & -u_2 &-2\Delta(23,24) & -\delta_1-\delta_2+\delta_3 & 2\Delta(12,24) & 2 \Delta(12,23) \\
v_3 & u_3 & 2\Delta(23,34)  & 2\Delta(13,34) & -\delta_1-\delta_2-\delta_3 & -2\Delta(13,23)\\
v_4 & u_4 &2\Delta(24,34) & 2\Delta(14,34) & -2\Delta(14,24) &\delta_1-\delta_2-\delta_3\\
0 & 0&-z_{2,3,4} & -z_{1,3,4} & z_{1,2,4} & z_{1,2,3}
\end{bmatrix*},\\
d_3&=\begin{bmatrix*}
b & 2a\\
-2c & -b\\
-v_1& -u_1\\
v_2 & u_2\\
v_3 & u_3\\
-v_4 & -u_4
\end{bmatrix*}.
\end{align*}

\begin{proof}
We use Buchsbaum-Eisenbud exactness criterion. The rank conditions are obviously satisfied. So we need to show that for $1\leq i\leq 3$, $\depth(I(d_i))\geq 3$. 
In fact, it is enough to show that the codimension of the ideal $J$ is $3$ and that all the ideals $I(d_i)$ for $1\leq i\leq 3$ are the same up to radical.
In order to see it, we give a geometric interpretation of the ideal $J$. 
 The element $u$ is the hyperdiscriminant for the representation $\bigwedge^2 F^*\otimes G^*$. Vanishing of this polynomial on a pencil $s(x_{i,j})+t(y_{i,j})$ means that in this pencil there is only one (``double") member of rank $\le 2$. The zero set $V(J)$ consists of pairs $((x_{i,j}, y_{i,j}), (z_{i,j,k}))$ such that the trivector $(z_{i,j,k})$ viewed as a functional on $F^*$ vanishes on the kernel of rank two member of the pencil.

The resolution with differentials $d_3, d_2, d_1$ given above can be constructed by a geometric method (see \cite{JWm03}) as follows.
Let us first write it in terms of representation theory 
$${\Bbb F}_\bullet: 0\rightarrow (\bigwedge^4F)^{\otimes 4}\otimes (\bigwedge^2G)^{\otimes 2}\otimes G\otimes A(-5,-2)\rightarrow $$
$$\rightarrow (\bigwedge^4F)^{\otimes 3}\otimes \bigwedge^2G\otimes G\otimes A(-3,-2)
\oplus (\bigwedge^4F)^{\otimes 2}\otimes\bigwedge^3 F\otimes (\bigwedge^2G)^{\otimes 2}\otimes A(-4,-1)\rightarrow$$
$$\rightarrow  (\bigwedge^4F)^{\otimes 2}\otimes (\bigwedge^2G)^{\otimes 2}\otimes A(-4,0)\oplus
\bigwedge^4F\otimes\bigwedge^3 F\otimes \bigwedge^2 G\otimes A(-2,-1)\rightarrow A$$

\medskip
The desingularization $Z$ of our set $V(J)$ is given by the homogeneous bundle $\eta^*$ where 
$\eta$ has weights $(1,1,0,0;1,0)$, $(1,0,1,0;1,0)$, $(0,1,1,0;1,0)$, $(1,0,0,1;1,0)$, $(1,1,0,0;0,1)$, $(1,0,1,0;0,1)$, $(1,1,1,0;0,0)$.
This is a homogeneous bundle of rank $7$ over $\Flag(1,3;F)\times \Grass(1,G)$, so its rank is $7+5+1=13$ as needed. 
One checks that the pushdown of the Koszul complex gives our complex ${\Bbb F}_\bullet$.
Even better is to consider bundle $\xi$ with the weights $(0,0,1,1;0,1)$, $(0,1,0,1;0,1)$, $(1,0,0,1;0,1)$, $(0,1,1,0;0,1)$, $(1,0,1,0;0,1)$,
$(0,0,1,1;1,0)$, $(0,1,1,1;0,0)$, $(1,0,1,1,0,0)$. This is a bundle desingularizing discriminant on the first coordinate and a two-dimensional sub bundle on the second. The homogeneous space is $\Grass(2,F)\times \Grass(1,G)$. So dimension of $Z$ is $8+4+1=13$.
Pushing down the corresponding Koszul complex gives our complex ${\Bbb F}_\bullet$. It follows from the results of \cite{JWm03}, chapter 5, that the variety $V(J)$ has rational singularities so $A/J$ is Cohen-Macaulay of codimension three.
The rest of the claims follow.
\end{proof}


\section{The deformed ideal J({\rm t})}
In this section, we give a deformed ideal $J(t)$ which is an ideal in the bigger polynomial ring $B=A[t]$. Equivariantly, the variable $t$ is just $\bigwedge\limits^4 F\otimes \bigwedge\limits^2 G$ and we set $\deg(t) = 2$. If we let $u_j=\sum_{i\neq j}(-1)^i x_{ij} z_{\hat{i}}$, $v_j=\sum_{i\neq j}(-1)^iy_{ij}z_{\hat{i}}$, $\delta_1=\Delta(12,34)$, $\delta_2=\Delta(13,24)$, and $\delta_3=\Delta(14,23)$, then the matrices of differentials $d_2$ and $d_3$ become
\begin{align*}
d_2(t)&=\setlength\arraycolsep{2pt} \begin{bmatrix*}
v_1 & u_1 & -\delta_1+\delta_2-\delta_3+t & 2\Delta(13,14) & -2\Delta(12,14) & -2\Delta(12,13) \\
-v_2 & -u_2 &-2\Delta(23,24) & -\delta_1-\delta_2+\delta_3+t & 2\Delta(12,24) & 2 \Delta(12,23) \\
v_3 & u_3 & 2\Delta(23,34)  & 2\Delta(13,34) & -\delta_1-\delta_2-\delta_3-t & -2\Delta(13,23)\\
v_4 & u_4 &2\Delta(24,34) & 2\Delta(14,34) & -2\Delta(14,24) &\delta_1-\delta_2-\delta_3+t\\
0 & 0&-z_{2,3,4} & -z_{1,3,4} & z_{1,2,4} & z_{1,2,3}
\end{bmatrix*},\\
d_3(t)&=\begin{bmatrix*}
b+t & 2a\\
-2c & -b+t\\
-v_1& -u_1\\
v_2 & u_2\\
v_3 & u_3\\
-v_4 & -u_4
\end{bmatrix*}.
\end{align*}

Then, $\im[d_1(t)]^T=\ker([d_2(t)]^T)$, where $[d_2(t)]^T$ and $[d_1(t)]^T$ are  transposes of matrices $d_2(t)$ and $d_1(t)$, respectively. Thus we get
$$J(t)=\im(d_1(t))=\langle -u_{2,3,4}+tz_{2,3,4},-u_{1,3,4}+tz_{1,3,4},-u_{1,2,4}+tz_{1,2,4}, u_{1,2,3}-tz_{1,2,3},u-t^2\rangle.$$

\begin{theorem}
The ideal $J(t)$ is a perfect ideal of codimension three in $B$. The minimal free resolution of $B/J(t)$ over $B$ is
\[0\xrightarrow{}B(-7)^2\xrightarrow{d_3(t)} B(-5)^6\xrightarrow{d_2(t)}B(-4)\oplus B(-3)^4 \xrightarrow{d_1(t)}B\]
\end{theorem}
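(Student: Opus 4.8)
The plan is to treat ${\Bbb F}_\bullet(t)$ (the complex $0\to B(-7)^2\xrightarrow{d_3(t)}B(-5)^6\xrightarrow{d_2(t)}B(-4)\oplus B(-3)^4\xrightarrow{d_1(t)}B$) as a $\ZZ$-graded deformation of the resolution ${\Bbb F}_\bullet$ of Section~2 and to deduce acyclicity from that alone. Here $B=A[t]$ is graded by $\deg t=2$, which is consistent with the displayed shifts: the entries of $d_3(t)$ are homogeneous of degree $2$, those of $d_2(t)$ of degree $2$ (of degree $1$ in the last row), and those of $d_1(t)$ of degree $3$ (the entry $u-t^2$ of degree $4$). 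Two facts must be verified at the outset. First, ${\Bbb F}_\bullet(t)$ really is a complex: $d_1(t)\,d_2(t)=0$ is part of the identity $\im[d_1(t)]^T=\ker([d_2(t)]^T)$ recorded just before the statement, and $d_2(t)\,d_3(t)=0$ follows by writing $d_2(t)=d_2+tE$ and $d_3(t)=d_3+tF$, expanding the product, and checking that the coefficients of $t$ and $t^2$ vanish --- the former reducing to the cancellation $d_2F+Ed_3=0$, the latter to $EF=0$ --- using $d_2d_3=0$ from Section~2. Second, specializing $t\mapsto 0$ returns the complex ${\Bbb F}_\bullet$ of Section~2 (up to sign), which is the minimal free resolution of $A/J$ and hence acyclic.

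With these in hand, since every term of ${\Bbb F}_\bullet(t)$ is free and $t$ is a nonzerodivisor on $B$, multiplication by $t$ yields a short exact sequence of complexes
\[0\longrightarrow{\Bbb F}_\bullet(t)\xrightarrow{\,\cdot t\,}{\Bbb F}_\bullet(t)\longrightarrow{\Bbb F}_\bullet\longrightarrow 0.\]
Its long exact homology sequence, together with $H_i({\Bbb F}_\bullet)=0$ for $i\geq 1$, forces $\cdot t$ to be a graded isomorphism on $H_i({\Bbb F}_\bullet(t))$ for each $i\geq 1$. But a $\ZZ$-graded $B$-module on which an element of strictly positive degree acts bijectively is zero (inspect its lowest nonzero graded piece), so $H_i({\Bbb F}_\bullet(t))=0$ for $i\geq 1$. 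Thus ${\Bbb F}_\bullet(t)$ is a free resolution of $B/J(t)$; it is minimal because all entries of the $d_i(t)$ are homogeneous of positive degree, and it has exactly the graded shape claimed. In particular $\pdim_B B/J(t)=3$.

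For perfectness and the codimension, the tail of the same long exact sequence (using $H_1({\Bbb F}_\bullet)=0$) is
\[0\longrightarrow B/J(t)\xrightarrow{\,\cdot t\,}B/J(t)\longrightarrow A/J\longrightarrow 0,\]
so $t$ is a nonzerodivisor on $B/J(t)$ and $B/(J(t)+tB)\cong A/J$. Since $A/J$ is Cohen--Macaulay of codimension three by Section~2, $B/J(t)$ is Cohen--Macaulay, and $\operatorname{codim}_B J(t)=\dim B-\dim B/J(t)=(\dim A+1)-(\dim A/J+1)=\operatorname{codim}_A J=3$. As $B$ is Cohen--Macaulay this gives $\operatorname{grade}J(t)=3=\pdim_B B/J(t)$, i.e. $J(t)$ is perfect of codimension three.

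The only genuine computation is the first step --- confirming that ${\Bbb F}_\bullet(t)$ is a complex (the relation $d_2(t)\,d_3(t)=0$ is the one not handed to us) and that reduction mod $t$ recovers the Section~2 resolution; everything afterwards is formal. If one prefers to avoid the deformation mechanism, the Buchsbaum--Eisenbud criterion applies directly to the same data: the ranks $(1,4,2)$ are the expected ones since rank is lower semicontinuous under $t\mapsto 0$ (alternatively $u-t^2$ is visibly a $2\times 2$ minor of $d_3(t)$), and for each $i$ one has $\operatorname{grade}_B I(d_i(t))\geq\operatorname{grade}_B\!\bigl(I(d_i(t))+tB\bigr)-1=\operatorname{grade}_A I(d_i(0))\geq i$, where $I(d_i(t))+tB$ is the preimage in $A[t]$ of $I(d_i(0))\subseteq A$ and $\operatorname{grade}_A I(d_i(0))\geq i$ by the acyclicity of ${\Bbb F}_\bullet$.
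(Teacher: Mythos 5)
Your main argument is correct but takes a genuinely different route from the one in the paper. The paper's proof is a one-liner via the Buchsbaum--Eisenbud criterion: the rank conditions are clear; since $d_i(t)\equiv d_i\pmod t$, one has $(I(d_i(t)),t)=(I(d_i),t)$, which has grade at least four in $B$ because $\operatorname{grade}_A I(d_i)\geq 3$ by Section~2; hence $\operatorname{grade}_B I(d_i(t))\geq 3$ and the criterion applies. Your final paragraph is essentially a more explicit restatement of exactly this argument. Your primary route, by contrast, never invokes Buchsbaum--Eisenbud: it extracts everything from the short exact sequence of complexes $0\to{\Bbb F}_\bullet(t)\xrightarrow{\,\cdot t\,}{\Bbb F}_\bullet(t)\to{\Bbb F}_\bullet\to 0$ and the observation that a bounded-below graded module on which an element of positive degree acts bijectively must vanish. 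This buys you something: it needs only the acyclicity of ${\Bbb F}_\bullet$, not the stronger grade bounds $\operatorname{grade}_A I(d_i)\geq 3$, and it produces the short exact sequence $0\to B/J(t)\xrightarrow{\,\cdot t\,}B/J(t)\to A/J\to 0$ as a byproduct, from which the Cohen--Macaulay property and the codimension descend formally. Note, though, that either approach presupposes that ${\Bbb F}_\bullet(t)$ is a complex, in particular $d_2(t)\,d_3(t)=0$; you gesture at the expansion in powers of $t$ but do not carry it out, and the paper is equally silent on the point, so this is not a gap relative to the paper --- but it is the one computational input that neither route can deduce formally from Section~2.
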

\begin{proof}
We use Buchsbaum-Eisenbud exactness criterion \cite{DABDEs73}. The rank conditions are obviously satisfied. So we need to show that $\depth(I(d(t)_i))\geq 3$ for $1\leq i\leq 3$. But we see immediately by construction that the ideals $(I(d(t)_i),t)$ are equal to $(I(d_i),t)$. So they have depth four. Thus the claim follows.\end{proof}


\section{The ideals $J$ and $J(t)$ are in the linkage class of complete intersections}

Next we obtain sequences of links that link the ideals $J$ and $J(t)$ to complete intersection ideals, respectively.

\begin{theorem}\label{ciJ}
The ideals $J$ and $J(t)$ are in the linkage class of complete intersections.
\end{theorem}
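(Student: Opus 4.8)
My plan is to exhibit an explicit sequence of links, since to prove that a perfect ideal of codimension three lies in the linkage class of complete intersections it suffices to find a finite chain of algebraic links starting from $J$ (or $J(t)$) and ending at a complete intersection. The natural strategy is to use the fact, recalled in the introduction, that minimal linkage preserves the sum of Betti numbers and that a double link preserves the Betti numbers themselves; since the resolution of $B/J(t)$ has format $(1,5,6,2)$ with total Betti number $14$, a single minimal link will produce an ideal of smaller ``complexity'' only if we can arrange the regular sequence inside $J(t)$ cleverly. First I would treat the deformed ideal $J(t)$, because the presence of the generator $u - t^2$, together with the four generators of the form $\pm u_{i,j,k} + t z_{i,j,k}$, suggests that $t$ plays the role of a deformation parameter that ``straightens'' the geometry: working in $B = A[t]$ one has extra room to pick a regular sequence.

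The concrete steps I envisage are: (1) choose a regular sequence $a_1, a_2, a_3 \in J(t)$ of length three — a promising candidate is to take two of the cubic generators $-u_{i,j,k} + t z_{i,j,k}$ and a generic linear combination involving $u - t^2$, or alternatively three sufficiently generic $K$-linear combinations of the five generators — and verify it is a regular sequence by checking that $B/(a_1,a_2,a_3)$ has the expected dimension (this is where the explicit matrices $d_1(t), d_2(t)$ let one compute); (2) form the linked ideal $J_1 = (a_1,a_2,a_3) : J(t)$, and compute its minimal free resolution via the standard mapping-cone / duality construction from the resolution ${\Bbb F}_\bullet(t)$, obtaining an ideal with strictly smaller total Betti number or, failing that, one that is recognizably simpler (e.g. an almost complete intersection, a Gorenstein ideal, or a Herzog ideal); (3) iterate, invoking the Buchsbaum–Eisenbud structure theorem for Gorenstein codimension three ideals and the known structure of almost complete intersections of codimension three (references \cite{DABDEs77}, \cite{AEB87}, \cite{CVW18}) to conclude that after finitely many links one reaches a complete intersection. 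For the undeformed ideal $J$ I would then specialize $t \mapsto 0$: either the same sequence of links specializes (one must check the relevant regular sequences remain regular modulo $t$, using that $(I(d_i),t) = (I(d_i(t)),t)$ as in the proof of the previous theorem), or one runs an entirely parallel but separate chain of links for $J$ directly.

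The main obstacle I expect is step (2): producing the minimal free resolution of the linked ideal $J_1$. The mapping cone of the comparison map from the Koszul complex on $(a_1,a_2,a_3)$ to ${\Bbb F}_\bullet(t)$ gives \emph{a} resolution of $B/(a_1,a_2,a_3) + J(t)$, and dualizing yields a resolution of the link, but this resolution is typically non-minimal, and cancelling the redundant summands requires understanding precisely which entries of the comparison maps are units — a genuinely delicate computation with the $6\times 6$ and $6\times 2$ matrices above. Moreover one must verify the link is reduced enough (i.e. that $J(t)$ is unmixed, which follows from Cohen–Macaulayness already proved) so that the linkage formulas apply. A secondary subtlety is choosing the regular sequence: the four cubics have a syzygy coming from the columns of $d_2(t)$ involving the $z$'s and the $\delta$'s, so three of them need not form a regular sequence, and one likely must mix in the quartic $u - t^2$ or pass to generic linear combinations, at the cost of leaving the equivariant category. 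I would expect the cleanest route to be a two-link (double link) that returns to format $(1,5,6,2)$ but with a resolution whose differentials have enough constant (unit) entries to strip down to a smaller format, repeating until the Koszul complex is reached; identifying that first good double link is the crux of the argument.
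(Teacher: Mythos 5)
Your high-level plan — exhibit an explicit chain of links from $J$ and $J(t)$ to ideals already known to be licci — is exactly the paper's strategy, and you correctly anticipate that the work lies in choosing good regular sequences and analyzing the mapping-cone resolutions of the colon ideals. However, the proposal stops at the plan and never actually produces the links, and the one concrete worry you raise is misplaced: you suggest that the syzygy among the four cubics might prevent three of them from forming a regular sequence, so that one would need to mix in the quartic or pass to generic linear combinations. In fact the paper takes precisely $L_1 = \langle u_{1,2,3}, u_{1,2,4}, u_{1,3,4}\rangle$ (three of the four equivariant cubic generators), and this \emph{is} a regular sequence; the existence of a single syzygy involving all five generators is no obstruction to a proper subset being a complete intersection. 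Correspondingly for $J(t)$ one takes the three deformed cubics $-u_{1,2,3}+tz_{1,2,3}$, $u_{1,2,4}-tz_{1,2,4}$, $u_{1,3,4}-tz_{1,3,4}$. Staying inside the equivariant generators is essential to keeping the colon ideals computable in closed form.

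The genuine content you are missing is the identification of the terminal ideal. After one link, the paper computes $K_1 = L_1 : J = \langle v_1, u_1, p_{K_1}, q_{K_1}, r_{K_1}\rangle$ (format $(1,5,6,2)$ again, as expected for a minimal link of a type-$2$, five-generated ideal); then a second link with $N_1 = \langle v_1, u_1, p_{K_1}\rangle$ gives $P_1 = N_1 : K_1 = \langle z_{1,3,4},\, y_{1,4}z_{1,2,3}-y_{1,3}z_{1,2,4},\, x_{1,4}z_{1,2,3}-x_{1,3}z_{1,2,4},\, \Delta(14,13)\rangle$, whose resolution drops to format $(1,4,5,2)$ and whose presentation matrix has three Koszul columns. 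The decisive observation is that $P_1$ is a hyperplane section of a codimension two determinantal (Hilbert--Burch) ideal, which is classically known to be in the linkage class of a complete intersection; this closes the argument by transitivity. Your proposal gestures at ``iterating until a Koszul complex is reached'' but never says how or where the iteration terminates, and that identification is the actual mathematical content of the theorem. Finally, for $J$ itself the paper runs the parallel chain $L_1 \rightsquigarrow K_1 \rightsquigarrow P_1$ directly rather than specializing $t \mapsto 0$; your specialization idea would require checking that the regular sequences remain regular and the colon ideals specialize correctly, which is extra work the direct computation avoids (and indeed $P_1(t) = P_1$, so the two chains converge).
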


\begin{proof}
First we show that the ideal $J$ is in the linkage class of a complete intersection by finding a regular sequence of elements of degree three in $J$. A good choice is taking
$L_1=\langle u_{1,2,3},u_{1,2,4},u_{1,3,4}\rangle$. If we set $K_1=(L_1:J)$, then 
$$K_1=\langle v_1,u_1,p_{K_1},q_{K_1},r_{K_1}\rangle$$ where 
\begin{align*}
p_{K_1}&=(\Delta(34,12)+\Delta(13,24)+\Delta(23,14))z_{1,3,4}+2\Delta(14,13)z_{2,3,4}),\\
q_{K_1}&=(\Delta(34,12)+\Delta(13,24)+\Delta(23,14))z_{1,2,4}+2\Delta(14,12)z_{2,3,4}, \; \text{and} \\
r_{K_1}& =(\Delta(34,12)+\Delta(13,24))z_{1,2,3}+\Delta(23,13)z_{1,2,4}+\Delta(12,23)z_{1,3,4}+2\Delta(13,12)z_{2,3,4}.
\end{align*}
Now let $N_1=\langle v_1,u_1,p_{K_1}\rangle$ and set $P_1=(N_1:K_1)$. Then 
$$P_1=\langle z_{1,3,4}, y_{1,4}z_{1,2,3}-y_{1,3}z_{1,2,4},x_{1,4}z_{1,2,3}-x_{1,3}z_{1,2,4},\Delta(14,13)\rangle,$$ and a resolution of $A/P_1$ is 
\[0\rightarrow A^2\xrightarrow{d_3^{P_1}}A^5\xrightarrow{d_2^{P_1}}A^4\xrightarrow{d_1^{P_1}} A\rightarrow A/P_1\rightarrow 0\] where $d_2^{P_1}=\begin{bmatrix*}
0& 0& y_{1,4}z_{1,2,3}-y_{1,3}z_{1,2,4} & x_{1,4}z_{1,2,3}-x_{1,3}z_{1,2,4} & \Delta(14,13)\\
x_{1,4} & x_{1,3} & -z_{1,3,4} & 0 & 0\\
-y_{1,4} & -y_{1,3} & 0 & -z_{1,3,4} &0\\
z_{1,2,4} & z_{1,2,3} & 0 & 0 & -z_{1,3,4}
\end{bmatrix*}
$.\\

\noindent Note that $d_2^{P_1}$ has three Koszul relations, and $A/P_1$ is a hyperplane section of a codimension two determinantal ideal $\langle  y_{1,4}z_{1,2,3}-y_{1,3}z_{1,2,4},x_{1,4}z_{1,2,3}-x_{1,3}z_{1,2,4}, \Delta(14,13) \rangle$. The ideal $P_1$ is well known to be in the linkage class of a complete intersection.



Similarly,  we show that the ideal $J(t)$ is in the linkage class of a complete intersection by finding a regular sequence of elements of degree three in $J(t)$. This time we consider $L_1(t)=\langle -u_{1,2,3}+z_{123}t,u_{1,2,4}-z_{124}t,u_{1,3,4}-z_{134}t\rangle$ and set $K_1(t)=(L_1(t):J(t))$.  Then 
$$K_1(t)=\langle v_1,u_1,p_{K_1}+z_{134}t,q_{K_1}+z_{124}t,r_{K_1}+z_{123}t\rangle,$$ 
Let $N_1(t)=\langle v_1,u_1,p_{K_1}+tz_{134}\rangle$ and $P_1(t)=(N_1(t):K_1(t))$. Then 
$$P_1(t)=\langle z_{1,3,4}, y_{1,4}z_{1,2,3}-y_{1,3}z_{1,2,4},x_{1,4}z_{1,2,3}-x_{1,3}z_{1,2,4},\Delta(14,13)\rangle,$$ and a resolution of $B/P_1(t)$ is 
\[0\rightarrow B^2\xrightarrow{d_3^{P_1(t)}}B^5\xrightarrow{d_2^{P_1(t)}}B^4\xrightarrow{d_1^{P_1(t)}} B\rightarrow B/P_1(t)\rightarrow 0\] where $d_2^{P_1(t)}=d_2^{P_1}$.

\noindent Here $d_2^{P_1(t)}$ also has three Koszul relations, and $B/P_1(t)$ is a hyperplane section of the codimension two determinantal ideal $\langle  y_{1,4}z_{1,2,3}-y_{1,3}z_{1,2,4},x_{1,4}z_{1,2,3}-x_{1,3}z_{1,2,4}, \Delta(14,13) \rangle$. The ideal $P_1(t)$ is well known to be in the linkage class of a complete intersection.\end{proof}



\section{Background on generic rings}
In this section, we work with finite free resolutions 
$${\bf F}_\bullet: 0\rightarrow F_3\rightarrow F_2\rightarrow F_1\rightarrow F_0$$
of length three over Noetherian rings $R$. If $\rank(F_i)=f_i$, we say that this resolution has format $(f_3, f_2, f_1, f_0)$.
If $\rank(d_i)=r_i$, we should have $f_3=r_3$, $f_2=r_3+r_2$, $f_1=r_2+r_1$. We will be dealing with the resolutions of cyclic modules, i.e., with the case $r_1=\rank(F_0)=1$.

In \cite{JWm16}, the third author constructed the generic rings ${\hat R}_{gen}$ for resolutions of all formats $(r_3, r_3+r_2, r_2+r_1, r_1)$.
This ring was related to Kac-Moody Lie algebra corresponding to the graph $T_{p,q,r}$, where 
$$(p,q,r)=(r_1+1, r_2-1, r_3+1).$$

 In particular, the ring ${\hat R}_{gen}$ was Noetherian if and only if $T_{p,q,r}$ was the Dynkin graph. We call the corresponding formats {\it the Dynkin formats}.
 
 The decomposition of the ring ${\hat R}_{gen}$ to irreducible representations of the group $\prod_{i=0}^3 GL(F_i)$ was described in \cite{JWm16}.
 The generators of ${\hat R}_{gen}$ are expected to come from three graded representations $W(d_3)$, $W(d_2)$, $W(a_2)$ (see \cite{CVW18}).
 
The natural question that arises is the description of the open set $U_{CM}$ of points in $Spec ({\hat R}_{gen})$ of points $P$ such that the corresponding ideals are perfect, i.e., the points that the dual complex ${\bf F}^*_\bullet$ is acyclic. One hopes that the points in the open set $U_{CM}$ give a generic form of perfect ideals with the resolution of the format $(2,6,5,1)$.

In \cite{CVW18}, the authors formulated a conjecture for the description of the open set $U_{CM}$ for all Dynkin types.
It says that three top graded components of representations $W(d_3)$, $W(d_2)$, $W(a_2)$ can be thought of as differentials of a complex ${\bf F}^{top}_\bullet$ over ${\hat R}_{gen}$ and that a point $P$ in $Spec ({\hat R}_{gen})$ is in $U_{CM}$ if and only if the complex ${\bf F}^{top}_\bullet$ tensored with $({\hat R}_{gen})_P$ is split exact. We call this open set in $Spec ({\hat R}_{gen})$ $U_{split}$.


\section{The ideals $J(t)$ and the open set $U_{split}$}

In the present paper, we work out for the format $(2,6,5,1)$, the form of the resolution that one gets over a point of $U_{split}$.
We recall (see \cite{KHLJW18}) the critical representations decompose as follows:
\begin{align*}W(d_3)&= F_2^*\otimes [F_3\oplus\bigwedge^2 F_1\oplus F_3^*\otimes \bigwedge^4F_1\oplus\bigwedge^2F_3^*\otimes S_{2,1^4}F_1],\\
W(d_2) &= F_2\otimes [F_1^*\oplus F_3^*\otimes F_1\oplus \bigwedge^2F_3^*\otimes\bigwedge^3F_1\oplus S_{2,1}F_3^*\otimes\bigwedge^5 F_1],\\
W(a_2) &= [F_1\oplus F_3^*\otimes\bigwedge^3F_1\oplus (\bigwedge^2F_3^*\otimes\bigwedge^4F_1\otimes F_1\oplus S_2F^*_3\otimes \bigwedge^5 F_1)\oplus\\
&\;\;\;\; \oplus S_{2,1}F_3^*\otimes S_{2,2,1,1,1}F_1\oplus S_{2,2}F_3^*\otimes S_{2^4,1}F_1].
\end{align*}

To calculate the split form, one assumes that the original complex ${\bf F}^{gen}_\bullet$ splits and one computes all higher structure theorems for this complex from $W(d_3)$, $W(d_2)$, $W(a_2)$, working over a polynomial ring in the defect variables, i.e., variables forming a basis of defect algebra for this case (see \cite{JWm16}).

In the following, $v^{(j)}_i$ ($1\le j\le 3$) denotes the tensor corresponding to the $i$-th graded component of $W(d_j)$ ($j=2,3$) and $v^{(1)}_i$ denotes the $i$-th graded component of $W(a_2)$.
We also denote the basis of $F_3$ by $\lbrace g_1, g_2\rbrace$, basis of $F_2$ by $\lbrace f_1, \ldots, f_6\rbrace$, basis of $F_1$ by $\lbrace e_1, \ldots, e_5\rbrace$, basis of $F_0$ by $1$. Putting the original complex ${\bf F}^{gen}_\bullet$ in the split exact form, that is, $d_3(g_1)=f_5, d_3(g_2)=f_6$, $d_2(f_i)=e_i$ for $1\le i\le 4$, $d_1(e_5)=1$ with all other differential on basis vectors being $0$, we introduce the defect variables (which correspond to the basis of the defect Lie algebra ${\bf L}$). 
In our case, ${\bf L}={\bf L}_1\oplus {\bf L}_2$, with ${\bf L}_1= F_3^*\otimes \bigwedge^2 F_1$ and ${\bf L}_2=\bigwedge^2F_3^*\otimes\bigwedge^4 F_1$ (see \cite{JWm16}). It is natural to name defect variables for our splitting form as follows. In ${\bf L}_1$, we have the variables $b^1_{i,j}$, $b^2_{i,j}$ ($1\le i<j\le 5$), and, in ${\bf L}_2$, we have the variables $c_{i,j,k,l}$ for $1\le i<j<k<l\le 5$.
Next, one works out the structure theorems $v^{(3)}_i$  from $W(d_3)$. For example, the map $v^{(3)}_1$ gives us the following component of the multiplication structure on the complex ${\bf F}_\bullet$:
$$e_i\cdot e_j=b^1_{i,j}f_5+b^2_{i,j}f_6$$
for $1\le i<j\le 4$,
$$e_i\cdot e_5=f_j+b^1_{i,5}f_5+b^2_{i,5}f_6$$
for $1\le i\le 4$.

The map $v^{(3)}_2$ is the lifting of the cycle
 
$$ \begin{tikzcd}
0 \arrow[r] & \bigwedge^2 F_3  \arrow[r] & F_3\otimes F_2  \arrow[r] &  S_2 F_2  \arrow[r] & S_2F_1 \\
& &  &  \bigwedge^4 F_1 \arrow[u, "S_2(v^{(3)}_1)"'] \arrow[ul, "v^{(3)}_2"] & 
\end{tikzcd}$$
where we have to add the part involving defect variables $c_{i,j,k,l}$. 

Finally, the map $v^{(3)}_3$ is the lifting

$$ \begin{tikzcd}
0 \arrow[r] & \bigwedge^2 F_3\otimes F_2  \arrow[r] &   F_3\otimes S_2 F_2  \arrow[r] &S_3 F_2 \\
& &  S_{2,1^4} F_1 \arrow[u, "(v^{(3)}_2)(v^{(3)}_1)"'] \arrow[ul, "v^{(3)}_2"] & 
\end{tikzcd}$$

One finds out that the tensor $v^{(3)}_3$  gives a matrix in which after the row and column operations all variables $b^1_{i,j}$ and $b^2_{i,j}$ for which the index $j$ is equal to $5$ are redundant. The remaining variables are renamed. The variables $b^1_{i,j}$ ($1\le i<j\le 4$) are named $x_{i,j}$, the variables $b^2_{i,j}$ ($1\le i<j\le 4$) are named $y_{i,j}$, the variables $c_{i,j,k,5}$ ($1\le i<j<k\le 4$) are named $z_{i,j,k}$ and the variable $c_{1,2,3,4}$ is named $t$. The middle matrix $d_2$ of the resolution of the deformed ideal $J(t)$ given above  is then the matrix one gets by working out the top graded component of $W(d_3)$. Using Macaulay 2, one can get the other two matrices $v^{(2)}_3$ and $v^{(1)}_4$.

One finds out that the tensor $v^{(3)}_3$  gives a matrix in which after the row and column operations all variables $b^1_{i,j}$ and $b^2_{i,j}$ for which the index $j$ is equal to $5$ are redundant. The remaining variables are renamed. The variables $b^1_{i,j}$ ($1\le i<j\le 4$) are named $x_{i,j}$, the variables $b^2_{i,j}$ ($1\le i<j\le 4$) are named $y_{i,j}$, the variables $c_{i,j,k,5}$ ($1\le i<j<k\le 4$) are named $z_{i,j,k}$ and the variable $c_{1,2,3,4}$ is named $t$. The middle matrix $d_2$ of the resolution of the deformed ideal $J(t)$ given above  is then the matrix one gets by working out the top graded component of $W(d_3)$. Using Macaulay 2, one can get the other two matrices $v^{(2)}_3$ and $v^{(1)}_4$.

This approach implies the following result.

\begin{theorem} Let $R$ be a local Noetherian ring, with characteristic of $R/{\mathfrak m}$ different than two and $I$ be a perfect ideal of codimension three with a resolution of $R/I$ having the format $(1,5,6,2)$. 
Let us denote this resolution ${\Bbb G}_\bullet$. Assume that the complex ${\Bbb G}^{top}_\bullet$ is split exact. Then there exists a homomorphism $\psi: B\rightarrow R$
such that ${\Bbb G}_\bullet = {\Bbb F}(t)_\bullet\otimes_{B}R$.
\end{theorem}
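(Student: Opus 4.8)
\medskip
\noindent\textbf{Plan of proof.} The idea is to read the statement off the universal situation: first realize ${\Bbb G}_\bullet$ as a specialization of the generic complex ${\bf F}^{gen}_\bullet$ of our format; then use the split-exactness of ${\Bbb G}^{top}_\bullet$ to see that the corresponding classifying map lands in the open set $U_{split}$; and finally invoke the explicit form of the universal complex over $U_{split}$, namely ${\Bbb F}(t)_\bullet$ over $B$, obtained by the defect-variable computation carried out above in this section.

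First I would apply the universal property of ${\hat R}_{gen}$ established in \cite{JWm16}: since ${\Bbb G}_\bullet$ is a free resolution of a cyclic module of our (Dynkin) format over the Noetherian ring $R$, there is a ring homomorphism $\varphi\colon {\hat R}_{gen}\to R$ and an isomorphism of complexes ${\Bbb G}_\bullet\cong {\bf F}^{gen}_\bullet\otimes_{{\hat R}_{gen}}R$; it is essential here that for our format ${\hat R}_{gen}$ is Noetherian. Since the top complex is built functorially from the critical representations $W(d_3)$, $W(d_2)$, $W(a_2)$, it is compatible with base change, so ${\Bbb G}^{top}_\bullet\cong {\bf F}^{top}_\bullet\otimes_{{\hat R}_{gen}}R$. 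Put $\mathfrak q:=\varphi^{-1}(\mathfrak m)$. A bounded complex of finite free modules over a Noetherian local ring is split exact if and only if its reduction modulo the maximal ideal is acyclic (Nakayama); applied to the local ring $R$ and then descended along the field extension $\kappa(\mathfrak q)\hookrightarrow R/\mathfrak m$, the hypothesis forces ${\bf F}^{top}_\bullet\otimes\kappa(\mathfrak q)$ to be acyclic, hence ${\bf F}^{top}_\bullet\otimes({\hat R}_{gen})_{\mathfrak q}$ to be split exact, i.e. $\mathfrak q\in U_{split}$. As $U_{split}$ is open, I may choose $f\notin\mathfrak q$ with $D(f)\subseteq U_{split}$; then $\varphi(f)$ is a unit of $R$, so $\varphi$ extends to $({\hat R}_{gen})_f\to R$.

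Next I would invoke the computation preceding this theorem. Running the higher structure theorems $v^{(3)}_1, v^{(3)}_2, v^{(3)}_3$, and then $v^{(2)}_3$ and $v^{(1)}_4$, over the polynomial ring on the defect variables $b^1_{i,j}, b^2_{i,j}$ ($1\le i<j\le 5$) and $c_{i,j,k,l}$ ($1\le i<j<k<l\le 5$), one finds --- using that $2$ is invertible, which is where the characteristic hypothesis enters --- that after the indicated row and column operations the variables $b^1_{i,5}, b^2_{i,5}$ become redundant, and that under the renaming $b^1_{i,j}\mapsto x_{i,j}$, $b^2_{i,j}\mapsto y_{i,j}$ ($1\le i<j\le 4$), $c_{i,j,k,5}\mapsto z_{i,j,k}$, $c_{1,2,3,4}\mapsto t$, the universal split resolution becomes precisely ${\Bbb F}(t)_\bullet$ over $B=A[t]$. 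In geometric terms this exhibits a basic open chart of $U_{split}$ as a $\prod_{i=0}^3 GL(F_i)$-homogeneous bundle over $\operatorname{Spec}(B)$ whose universal complex is pulled back from ${\Bbb F}(t)_\bullet$; concretely, shrinking $D(f)$ if necessary, it yields a homomorphism $B\to ({\hat R}_{gen})_f$ with ${\bf F}^{gen}_\bullet\otimes_{{\hat R}_{gen}}({\hat R}_{gen})_f\cong {\Bbb F}(t)_\bullet\otimes_B({\hat R}_{gen})_f$. Setting $\psi$ to be the composite $B\to ({\hat R}_{gen})_f\xrightarrow{\varphi}R$ then gives
\[
{\Bbb F}(t)_\bullet\otimes_B R\;\cong\;{\Bbb F}(t)_\bullet\otimes_B({\hat R}_{gen})_f\otimes_{({\hat R}_{gen})_f}R\;\cong\;{\bf F}^{gen}_\bullet\otimes_{{\hat R}_{gen}}R\;\cong\;{\Bbb G}_\bullet,
\]
which is the asserted conclusion.

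The main obstacle is the middle link: promoting the explicit defect-variable computation into a genuine identification of an open chart of $U_{split}$ --- as a scheme carrying the universal complex --- with the $B$-scheme carrying ${\Bbb F}(t)_\bullet$. One has to check that the elimination of $b^1_{i,5}, b^2_{i,5}$ is carried out by honestly invertible row and column operations (so that it commutes with base change), that the renamings and the accompanying corrections involving the $c_{i,j,k,l}$ are mutually consistent across all three structure maps $v^{(3)}_\bullet$, $v^{(2)}_3$, $v^{(1)}_4$ at once, and --- so that the output is genuinely a resolution --- that the resulting complex ${\Bbb F}(t)_\bullet$ is acyclic of the stated format, which is exactly the perfectness statement for $J(t)$ established above. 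The remaining ingredients are routine: the openness of $U_{split}$ in $\operatorname{Spec}({\hat R}_{gen})$, the detection of split exactness of a finite free complex over a Noetherian local ring on its residue field, and the fact that $\psi$ --- like the classifying map $\varphi$ and the chosen splitting of ${\Bbb G}^{top}_\bullet$ --- is by no means unique.
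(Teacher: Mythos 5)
Your argument is correct and rests on the same ingredients as the paper's proof: the universal property of ${\hat R}_{gen}$ produces a classifying map $\phi\colon {\hat R}_{gen}\to R$ with ${\Bbb G}_\bullet\cong {\bf F}^{gen}_\bullet\otimes R$, split-exactness of ${\Bbb G}^{top}_\bullet$ lets one pass to the canonical split form, and $\psi$ is then read off the defect-variable renaming $b^1_{i,j}\mapsto x_{i,j}$, $b^2_{i,j}\mapsto y_{i,j}$, $c_{i,j,k,5}\mapsto z_{i,j,k}$, $c_{1,2,3,4}\mapsto t$. The one structural divergence is that you factor $\psi$ through a localization $({\hat R}_{gen})_f$, which requires identifying a chart of $U_{split}$ with (a bundle over) $\operatorname{Spec}(B)$ and choosing a splitting of ${\bf F}^{gen}_\bullet$ over the non-local ring $({\hat R}_{gen})_f$---precisely the point you yourself flag as the main obstacle. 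The paper sidesteps this: since ${\Bbb G}^{top}_\bullet$ is split exact over the \emph{local} ring $R$, one changes bases in $G_3,G_2,G_1$ directly over $R$ so that ${\Bbb G}^{top}_\bullet$ is in canonical form; the relations holding in ${\hat R}_{gen}$ (the structure theorems encoded in $W(d_3)$, $W(d_2)$, $W(a_2)$ and the defect-variable computation), transported by $\phi$, then force the entries of the $d_i$ to be the universal polynomial expressions in distinguished elements of $R$, and $\psi$ simply sends each variable of $B$ to its counterpart in $R$. Working over the local base removes the need to trivialize the splitting over an open subscheme of $\operatorname{Spec}({\hat R}_{gen})$; your version is not wrong, but this is the natural shortcut.
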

\begin{proof} There exists a homomorphism $\phi: {\hat R}_{gen}\rightarrow R$ such that ${\Bbb G}_\bullet = {\Bbb F}^{gen}_{\bullet}\otimes_{{\hat R}_{gen}}R$.
Since ${\Bbb G}^{top}_\bullet$ is split exact, we can change bases in $G_3$, $G_2$, $G_1$ so the complex ${\Bbb G}^{top}_\bullet$ is in canonical form. Then the relations described above in ${\hat R}_{gen}$ imply existence of homomorphism $\psi$. It sends the variables in $B$ to their counterparts in $R$ (after the appropriate renaming indicated above).
\end{proof}

\section{Multiplicative structure on the resolution of $A/J$.}

In this section, we assume that $K$ is a field of characteristic zero. Since the group $GL(F)\times GL(G)$ is reductive, we know that all structure theorems on our resolution have to exist in an equivariant form.
We determine the structure theorems on the resolution of the module $A/J$. It turns out that the equivariant property determines the structure theorems completely.

We start with the multiplication structure.
The multiplication $F_1\otimes F_1\rightarrow F_2$ has nonzero components

{\small $\displaystyle \bigwedge^2 (S_{2,2,2,1}F\otimes S_{1,1}G)=S_{4,4,3,3}F\otimes S_{2,2}G(-4,-2)\rightarrow (\bigwedge^2 F\otimes G)\otimes S_{3,3,3,3}F\otimes S_{2,1}G(-3,-2),\\
S_{2,2,2,1}F\otimes S_{1,1}G\otimes S_{2,2,2,2}F\otimes S_{2,2}G(-6,-1)=S_{4,4,4,3}F\otimes S_{3,3}G(-6,-1)\rightarrow(\bigwedge^3 F)\otimes S_{3,3,3,2}F\otimes S_{2,2}G(-4,-1),$}

\noindent so they are determined (up to scalar). Also, all structure constants have positive degree.

The multiplication $F_2\otimes F_1\rightarrow F_3$ has nonzero components
\begin{center}
{\small  $\displaystyle S_{3,3,3,2}F\otimes S_{2,2}G\otimes S_{2,2,2,1}F\otimes S_{1,1}G(-6,-2)\rightarrow (\bigwedge^2 F\otimes G)\otimes S_{4,4,4,4}F\otimes S_{3,2}G(-5,-2),$\\}
{\small  $\displaystyle S_{3,3,3,3}F\otimes S_{2,1}G\otimes S_{2,2,2,1}F\otimes S_{1,1}G(-5,-3)\rightarrow (\bigwedge^3 F)\otimes S_{4,4,4,4}F\otimes S_{3,2}G(-5,-2),$\\}
{\small  $\displaystyle S_{3,3,3,3}F\otimes S_{2,1}G\otimes S_{2,2,2,2}F\otimes S_{2,2}G(-7,-2)\rightarrow (\bigwedge^4F\otimes S_2 G)\otimes S_{4,4,4,4}F\otimes S_{3,2}G(-5,-2),$} 
\end{center}
so all components are determined by the equivariant property. All structure constants are of positive degree.

Let us  calculate the component $v^{(3)}_2$.This is the map {\small $\displaystyle \bigwedge^4$}$F_1\rightarrow F_3\otimes F_2$ has nonzero components: 
\begin{center}
{\small $\displaystyle \bigwedge^4 (S_{2,2,2,1}F\otimes S_{1,1}G)(-8,-4)\rightarrow  S_{4,4,4,4}F\otimes S_{3,2}G\otimes S_{3,3,3,3}F\otimes S_{2,1}G(-8,-4),$}
\end{center}
so this component is a split monomorphism,
{\small $$\bigwedge^3 (S_{2,2,2,1}F\otimes S_{1,1}G)\otimes S_{2,2,2,2}F\otimes S_{2,2}G(-10,-3)\rightarrow (\bigwedge^2F\otimes G)\otimes S_{4,4,4,4}F\otimes S_{3,2}G\otimes S_{3,3,3,2}F\otimes S_{2,2}G(-9,-3),$$}
Finally, we calculate the nonzero component $v^{(3)}_3$. This is the map {\small $$S_{2,1,1,1,1}F_1\rightarrow\bigwedge^2 F_3\otimes F_2.$$}
One sees easily that {\small $\bigwedge^5 F_1=S_{9,9,9,9}F\otimes S_{6,6}G(-12,-4)$}. Similarly, we have 
{\small $$\bigwedge^2 F_3=S_{8,8,8,8}F\otimes S_{5,5}G(-10,-4).$$}
The nonzero component of $v^{(3)}_3$ is

{\small $$S_{11,11,11,10}F\otimes S_{7,7}G(-14,-5)\rightarrow S_{11,11,11,10}F\otimes S_{7,7}G(-14,-5),$$}
which is a split isomorphism. So the nonzero component $v^{(3)}_3$ is completely split, as expected.

\section{Application: The pencils of quartics.}

Let $S=K[X,Y,Z]$ be the polynomial ring in three variables. We assume that $K$ is a field of characteristic zero.
Let $f_1, f_2$ be two general homogeneous polynomials of degree $4$ in the dual variables $X^*, Y^*, Z^*$.
Then $f_1, f_2$ define the ideal $I_{f_1, f_2}$ of the polynomials in $X,Y,Z$ vanishing on $f_1, f_2$ (they act via differential operators).
One can check easily that for general choice of $f_1, f_2$ the module $S/J_{f_1, f_2}$ has a minimal graded resolution
$${\Bbb F}^{f_1, f_2}_\bullet :0\rightarrow S^2(-7)\rightarrow S^6(-5)\rightarrow S(-4)\oplus S^4(-3)\rightarrow S.$$

\begin{theorem} For general $f_1, f_2$, the complex ${\Bbb F}^{f_1, f_2}_\bullet$ is a specialization of the complex ${\Bbb F}_\bullet$.
\end{theorem}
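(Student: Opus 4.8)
The plan is to exhibit a graded $K$-algebra homomorphism $\phi\colon A\to S$ and to prove that $\mathbb F_\bullet\otimes_A S$ is a minimal graded free resolution of $S/J_{f_1,f_2}$; since minimal resolutions are unique up to isomorphism, this identifies $\mathbb F^{f_1,f_2}_\bullet$ with the specialization $\mathbb F_\bullet\otimes_A S$. The graded shape of $\mathbb F^{f_1,f_2}_\bullet$ rigidifies $\phi$: the generator $u$ has degree four and each $u_{i,j,k}$ has degree three, while the minimal generators of $J_{f_1,f_2}$ also lie in degrees three and four, so $\phi$ is forced to send every one of the sixteen coordinate functions $x_{i,j},y_{i,j},z_{i,j,k}$ into the space $S_1$ of linear forms. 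Equivalently, $\phi$ is given by a choice of sixteen linear forms $\phi(x_{i,j}),\phi(y_{i,j}),\phi(z_{i,j,k})\in S_1$, that is, by a pencil of $4\times 4$ skew bilinear forms on a four-dimensional space together with a trivector, all with entries linear in $X,Y,Z$. The whole problem is thus to read this data off from the pencil $\langle f_1,f_2\rangle$.

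To produce $\phi$ I would exploit the geometric description of $V(J)$ recalled in the proof that $\mathbb F_\bullet$ resolves $A/J$: a closed point of $V(J)$ is a pencil of $4\times 4$ skew forms having exactly one member of rank $\le 2$, together with a trivector vanishing on the two-dimensional kernel of that member. Starting from a general pencil of quartics in $S=K[X,Y,Z]$, I would use the catalecticant maps of $f_1,f_2$ and the degree-two and degree-three graded pieces of the apolar ideal $J_{f_1,f_2}$ to manufacture such a pencil of skew forms on a four-dimensional space, together with the accompanying trivector, and let $\phi$ be the corresponding map. An alternative, more structural route uses the generic-ring machinery: one shows that a general pencil of quartics, localized at the irrelevant maximal ideal of $S$, is a point of $U_{split}$ for the format $(1,5,6,2)$, then applies the structure theorem proved earlier (a format-$(1,5,6,2)$ perfect ideal over a local ring, with split exact top complex, has a resolution pulled back from $\mathbb F(t)_\bullet$) to obtain $\psi\colon B\to S$ with $\mathbb F^{f_1,f_2}_\bullet=\mathbb F(t)_\bullet\otimes_B S$, and finally checks that $\psi(t)=0$, so that $\psi$ factors through $A=B/(t)$ and gives the desired $\phi$.

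Granting a candidate $\phi$, the verification is routine. First one checks that $\phi(u_{i,j,k})$ and $\phi(u)$ annihilate both $f_1$ and $f_2$ as differential operators, so that $J':=\phi(J)S\subseteq J_{f_1,f_2}$. Next one checks that $J'$ has codimension three -- for instance, for general $f_1,f_2$ the elements $\phi(u_{1,2,3}),\phi(u_{1,2,4}),\phi(u_{1,3,4})$, the images of the degree-three regular sequence $L_1$ from the proof of Theorem~\ref{ciJ}, already generate an $\mathfrak m_S$-primary ideal. Now acyclicity: in the proof that $\mathbb F_\bullet$ resolves $A/J$ it was shown that $I(d_1),I(d_2),I(d_3)$ have a common radical $\mathfrak a$ in $A$; writing $I(\phi(d_i))=\phi(I(d_i))S$ and squeezing between $\phi(\mathfrak a)^N S$ and $\phi(\mathfrak a)S$ one finds that all three ideals $I(\phi(d_i))$ have radical $\sqrt{J'}$, which has codimension three, so $\depth I(\phi(d_i))=3$ and $\rank\phi(d_i)=\rank d_i$ for $i=1,2,3$. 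By the Buchsbaum--Eisenbud criterion $\mathbb F_\bullet\otimes_A S$ is then a finite free resolution of $S/J'$, and it is minimal because $\phi$ is graded and every entry of every $d_i$ has positive degree. Consequently $S/J'$ and $S/J_{f_1,f_2}$ share the graded Betti table $0\to S(-7)^2\to S(-5)^6\to S(-4)\oplus S(-3)^4\to S$, hence the same Hilbert series, so the inclusion $J'\subseteq J_{f_1,f_2}$ is an equality and $\mathbb F_\bullet\otimes_A S=\mathbb F^{f_1,f_2}_\bullet$.

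The main obstacle is the construction of $\phi$ and the apolarity check in the second paragraph: making precise how the pencil of quartics produces the pencil of skew forms and the trivector of a point of $V(J)$, and verifying that the resulting cubics and quartic really do annihilate $f_1$ and $f_2$ (equivalently, pinning down the generic pencil of quartics as a point of $U_{split}$ and confirming that $t\mapsto 0$). A safe way to complete the proof is to perform this verification, in characteristic zero, for a single explicitly chosen general pair $f_1,f_2$ by direct computation, and then to invoke upper-semicontinuity of graded Betti numbers together with the openness of $U_{split}$ in $\operatorname{Spec}(\hat R_{gen})$ to pass from that pair to a general pair $f_1,f_2$.
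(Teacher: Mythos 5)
Your overall strategy --- produce a graded ring map $\phi\colon A\to S$ (or $\psi\colon B\to S$), push $\mathbb F_\bullet$ forward, and verify acyclicity via Buchsbaum--Eisenbud --- is quite different from the paper's, and it is not carried far enough to constitute a proof. You correctly identify that the whole content is the construction of $\phi$, but neither of your two proposed constructions is executed. The catalecticant route is only gestured at: you never say which linear forms the sixteen coordinates $x_{i,j}, y_{i,j}, z_{i,j,k}$ should map to, nor why the resulting $\phi(u_{i,j,k}),\phi(u)$ would be apolar to $f_1,f_2$. The $U_{split}$ route has a more concrete obstruction that you flag but do not resolve: the structure theorem only yields $\psi\colon B\to S$ with $\mathbb G_\bullet=\mathbb F(t)_\bullet\otimes_B S$, and since $t$ has degree two in the single grading, $\psi(t)$ is an arbitrary quadric in $S$ with no a priori reason to vanish. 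Without showing $\psi(t)=0$ you would only prove that $\mathbb F^{f_1,f_2}_\bullet$ specializes $\mathbb F(t)_\bullet$, not $\mathbb F_\bullet$. Your fallback (check one pair by machine, then invoke openness of $U_{split}$ and semicontinuity of Betti numbers) has the same difficulty, and moreover semicontinuity of Betti numbers by itself gives constancy of the graded Betti table on a dense open set, not a coherent family of ring maps to $\operatorname{Spec}(\hat R_{gen})$; one would additionally need flatness of the family of resolutions over the parameter space of pairs, which you do not address.

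The paper takes an entirely different and more hands-on route, which avoids constructing $\phi$ on the nose. Observing that the last row of $d_2$ in $\mathbb F^{f_1,f_2}_\bullet$ consists of six linear forms that generate $\mathfrak m_S$ for general $f_1,f_2$, it drops to the subideal $L_{f_1,f_2}$ generated by the four cubics. This is an almost complete intersection, and by the classification of ACI resolutions its third differential $d'_3$ has a rigid block form with a $3\times3$ Koszul block of quadrics $c_{i,j}$ over a $3\times3$ block of linear forms $u_{i,j}$. On the generic side one sets $z_{2,3,4}=0$ in $\mathbb F_\bullet$, drops similarly to the four-cubic subideal of $J$, and computes the corresponding third differential $d^G_3$. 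The proof then reduces to showing, by explicit substitutions in the $x$-, $y$-, $z$-variables (e.g. $x_{1,3}\mapsto x_{1,3}+\alpha z_{1,2,3}$, $x_{1,4}\mapsto x_{1,4}+\alpha z_{1,2,4}$, which perturbs the remaining free quadric by prescribed $2\times2$ minors of the linear block), that every admissible $d'_3$ is a specialization of $d^G_3$; the mapping-cone relation between $\mathbb G^{f_1,f_2}_\bullet$ and $\mathbb F^{f_1,f_2}_\bullet$ then transports this back to the full resolution. This sidesteps both the apolarity verification and the $\psi(t)=0$ issue that your approach would still need to confront.
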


\begin{proof} A standard calculation shows that when the complex ${\Bbb F}^{f_1; f_2}_\bullet$ has form as above,
the Poincar{\'e} polynomial of $S/J_{f_1, f_2}$ is $1+3t+6t^2+6t^3+2t^4$.
We notice that the last row of the differential $d_2$ in the complex ${\Bbb F}^{f_1, f_2}_\bullet$ consists of six linear forms. For general $f_1, f_2$ these forms will generate the maximal ideal $(X, Y, Z)$. This means after applying some column operations we can bring this row to the form
$(0,0,0,X,Y,Z)$. Let $L_{f_1, f_2}$ be the ideal generated by four cubics in $J_{f_1, f_2}$.
Then the Poincar{\'e} polynomial of $S/L_{f_1, f_2}$ is $1+3t+6t^2+6t^3+3t^4$.
This means the minimal free resolution of $S/L_{f_1, f_2}$ is
$${\Bbb G}^{f_1, f_2}_\bullet : 0\rightarrow S^3(-7)\rightarrow S^3(-5)\oplus S^3(-6)\rightarrow S^4(-3)\rightarrow S.$$
In fact we have the exact sequence
$$0\rightarrow K(-3)\rightarrow S/L_{f_1, f_2}\rightarrow S/J_{f_1, f_2}\rightarrow 0$$
which makes ${\Bbb F}^{f_1, f_2}_\bullet$ quasiisomorphic to the mapping cone of the map of complexes
$${\Bbb K}(-3)_\bullet\rightarrow {\Bbb G}^{f_1, f_2}_\bullet$$
where ${\Bbb K}_\bullet$ is the Koszul complex on $X, Y, Z$.

We know the structure of almost complete intersections \cite{CVW18}.
Thus we know that the map of the third differential of ${\Bbb G}_\bullet$ has to be
$$d'_3=\begin{bmatrix}0&c_{1,2}&c_{1,3}\\-c_{1,2}&0&c_{2,3}\\
-c_{1,3}&-c_{2,3}&0\\
u_{1,1}&u_{1,2}&u_{1,3}\\
u_{2,1}&u_{2,2}&u_{2,3}\\
u_{3,1}&u_{3,2}&u_{3,3}
\end{bmatrix}$$
where $u_{i,j}$ are linear forms and $c_{i,j}$ are quadratic forms.

Now we start with the generic complex ${\Bbb F}_\bullet$, make one on the $z$-variables, say $z_{2,3,4}$ to be zero and we see that the linear row of $d_2$ has the same structure. So taking $L$ to be the ideal generated by four cubics in $J(t)$ we see that the third differential of the minimal free resolution ${\bf G}_\bullet$ of
$A/L$ has the form

$$\scriptsize{d^G_3=\setlength\arraycolsep{3 pt} \begin{bmatrix}  0&-b &-a\\
b&0&-\Delta(23,14)+\Delta(24,13)-\Delta(34,12)\\
a&\Delta(23,14)-\Delta(24,13)+\Delta(34,12)&0\\
-z_{1,3,4}&y_{3,4}&x_{3,4}\\
-z_{1,2,4}&y_{2,4}&x_{2,4}\\
-z_{1,2,3}&y_{2,3}&x_{2,3}
\end{bmatrix}}$$
where $a=2(x_{1,2}z_{1,3,4}-x_{1,3}z_{1,2,4}+x_{1,4}z_{1,2,3})$, $b=2(y_{1,2}z_{1,3,4}-y_{1,3}z_{1,2,4}+y_{1,4}z_{1,2,3})$.

Thus we need to see that any matrix $d'_3$ can be obtained from the matrix $d^G_3$ by specialization. Without loss of generality, we can assume that the elements $z_{1,2,3}, z_{1,2,4}, z_{1,3,4}$ generate the maximal ideal $(X, Y, Z)$.
Then we notice the lower part of the matrix is a specialization of the matrix with entries in $A$. This part of the matrix is determined by $z_{i,j,k}$ and by $x_{i,j}$ and $y_{i,j}$ with $i,j\ne 1$. The quadrics $c_{1,2}$ and $c_{1,3}$ can be also obtained as specialization because they are contained in the ideal $(X, Y, Z)$.
So the remaining problem is how to get the remaining quadric in the proper form. We show that actually any quadric can be brought in that form.
In order to do that let us investigate how we can change the remaining quadric $\Delta(23,14)-\Delta(24,13)+\Delta(34,12)$ without changing the others. This can be achieved for example by changing $x_{1,3}$ and $x_{1,4}$ into $x_{1,3}+\alpha z_{1,2,3}$ and  $x_{1,4}+\alpha z_{1,2,4}$ respectively.
The total change in $\Delta(23,14)-\Delta(24,13)+\Delta(34,12)$ will be $\alpha (z_{1,2,4}y_{2,3}-z_{1,2,3}y_{2,4})$, i.e. a multiple of the $2\times 2$ minor of the lower block of $d_3^G$ on the first two columns. Similarly we can get the other $2\times 2$ minors on these columns as well as minors on the first and third column. So if these six columns are linearly independent over $K$, we can change $\Delta(23,14)-\Delta(24,13)+\Delta(34,12)$ to an arbitrary quadric. In order to see it happens on Zariski open set we need to exhibit a $3\times 3$ matrix of linear forms such that all $2\times 2$ minors in the first and second and on the first and third columns are linearly independent. It is enough to take the matrix
$$\begin{bmatrix} y&z&0\\
x&y&z\\
0&x&-y
\end{bmatrix}.$$
\end{proof}

\section{Acknowledgment} The authors thank Lars Christensen and Oana Veliche for helpful discussions. Experiments with the computer algebra software Macaulay2 (see \cite{GS}) have provided numerous valuable insights. The first and second author acknowledges the support of the fourth author for their visit in Fall 2017 which was funded by the Sidney Professorial Fund. Jerzy Weyman was partially supported by the Sidney Professorial Fund and the NSF grant DMS-1802067.

\bibliographystyle{amsplain}

\providecommand{\MR}[1]{\mbox{\href{http://www.ams.org/mathscinet-getitem?mr=#1}{#1}}}
\renewcommand{\MR}[1]{\mbox{\href{http://www.ams.org/mathscinet-getitem?mr=#1}{#1}}}
\providecommand{\href}[2]{#2}

\end{document}